\definecolor{webgreen}{rgb}{0,.5,0}
\definecolor{webbrown}{rgb}{.6,0,0}
\newcommand{\seqnum}[1]{\href{https://oeis.org/#1}{\underline{#1}}}
\begin{document}
	
	\theoremstyle{plain}
	\newtheorem{theorem}{Theorem}
	\newtheorem{corollary}[theorem]{Corollary}
	\newtheorem{lemma}[theorem]{Lemma}
	\newtheorem{proposition}[theorem]{Proposition}
	\newtheorem{question}[theorem]{Question}
	
	\theoremstyle{definition}
	\newtheorem{definition}[theorem]{Definition}
	\newtheorem{example}[theorem]{Example}
	\newtheorem{conjecture}[theorem]{Conjecture}
	
	\theoremstyle{remark}
	\newtheorem{remark}[theorem]{Remark}
	
	\begin{center}
		\vskip 1cm{\LARGE\bf Linear Combinations of Dirichlet Series Associated with the Thue-Morse Sequence}
		\vskip 1cm
		\large
		L\'aszl\'o T\'oth \\
		Grand Duchy of Luxembourg \\
		\href{mailto:uk.laszlo.toth@gmail.com}{\tt uk.laszlo.toth@gmail.com}
	\end{center}
	
	\vskip .2 in
	
	\begin{abstract}
		In this paper we study sums of Dirichlet series whose coefficients are terms of the Thue-Morse sequence and variations thereof. We find closed-form expressions for such sums in terms of known constants and functions including the Riemann $\zeta$ function and the Dirichlet $\eta$ function using elementary methods.
	\end{abstract}

	\section{Introduction}

Let $t_n$ denote the binary digit sum of the positive integer $n$ modulo $2$, in other words, the $n^{\rm th}$ element of the Thue-Morse sequence $(t_n)_{n\geq0}$, which begins
$$
0,1,1,0,1,0,0,1,\ldots.
$$
This sequence was first considered by Prouhet \cite{Prouhet51} and has applications in many different fields of mathematics (see for instance Allouche and Shallit \cite{AlloucheShallit99} for a detailed overview). A popular variant is the $\pm1$ Thue-Morse sequence, denoted by $\varepsilon_n$ and defined as $((-1)^{t_n})_{n\geq0}$. Dirichlet series associated with $\varepsilon_n$ have been widely studied during the past years, in particular $\sum_{n\geq1} \frac{\varepsilon_n}{n^s}$ and $\sum_{n\geq0} \frac{\varepsilon_n}{(n+1)^s}$, which converge for $\Re(s)>1$. Allouche and Cohen \cite{AlloucheCohen85} continued the series analytically and gave the functional equation
$$
\sum_{n\geq0} \frac{\varepsilon_n}{(n+1)^s} = \sum_{k\geq1} 2^{-s-k} \binom{s+k-1}{k} \sum_{n\geq0} \frac{\varepsilon_n}{(n+1)^{s+k}},
$$
and showed that it admits non-trivial zeros at $s = (2k\pi i)/ \log2$ for any integer $k$. The series $\sum_{n\geq1} \frac{\varepsilon_n}{n^s}$ is continued in a similar manner, yielding a set of non-trivial zeros at $s = i\pi(2k+1)/ \log2$ (although as noted by Allouche \cite{Allouche15}, the question of whether these are \textit{all} the non-trivial zeros is still an open one). These results were then further extended by Allouche, Mend\`es France and Peyri\`ere \cite{Allouche00} to $d$-automatic sequences, for $d\geq2$. Furthermore, Allouche and Cohen \cite{AlloucheCohen85} and Alkauskas \cite{Alkauskas01} have noticed that the two series above are related through the identity
$$
\sum_{n\geq0} \frac{\varepsilon_n}{(n+1)^s} = \frac{1-2^s}{1+2^s} \sum_{n\geq1} \frac{\varepsilon_n}{n^s}.
$$
Since then, these series have been used in various contexts. For instance, Allouche and Cohen \cite{AlloucheCohen85} used the derivative of $\sum_{n\geq0} \frac{\varepsilon_n}{(n+1)^s}$ at $s=0$ to give an alternative proof of the Woods-Robbins product,
$$
\prod_{n\geq0} \left( \frac{2n+1}{2n+2} \right) ^{\varepsilon_n} = \frac{\sqrt2}{2},
$$
which has been extended to many different types of infinite products (see Allouche, Cohen, Mend\`es France and Shallit \cite{AlloucheShallit87}, Allouche and Sondow \cite{AlloucheSondow08}, Allouche, Riasat and Shallit \cite{Riasat19} and T\'oth \cite{Toth20} for example).

Series involving the $b$-ary sum-of-digits function $s_b(n)$ have also been studied. A classic example is
$$
\sum_{n\geq1} \frac{s_b(n)}{n(n+1)} = \frac{b}{b-1} \log b,
$$
which was proved by Shallit \cite{Shallit84}, while other identities such as
$$
\sum_{n\geq1} \frac{s_2(n) (2n+1)}{n^2(n+1)^2} = \frac{\pi^2}{9}
$$
are due to Allouche and Shallit \cite{AlloucheShallit90}.

\subsection{Scope of this paper}

No closed-form expression is currently known for Dirichlet series involving only $t_n$ and $t_{n-1}$ and combinations thereof in terms of known constants and functions. In this paper, we provide such closed forms and give a  formula for generating similar linear combinations. A few examples are
\begin{align*}
	\sum_{n\geq1} \frac{5 t_{n-1} + 3 t_n}{n^2} &= \frac{2 \pi^2}{3}, \\
	\sum_{n\geq1} \frac{9 t_{n-1} + 7 t_n}{n^3} &= 8 \zeta(3).
\end{align*}
Then, we extend our results to combinations of sequences involving different alphabets $\{a,b\}$ along the Thue-Morse sequence, that is, sequences with the substitutions $0\to a$ and $1\to b$ over $t_n$, with $a,b\in\mathbb{R}$. An example is
$$
\sum_{n\geq1} \frac{17 q_{n-1} + 15 r_n}{n^4} = 16 \eta(4),
$$
where $\eta(s)$ denotes the alternating zeta function $\sum_{n\geq1} \frac{(-1)^{n-1}}{n^s}$ and $q_n$ and $r_n$ denote the sequences defined respectively by $\{-\sqrt2,1-\sqrt2\}$ and $\{\frac{17\sqrt2 -2}{15},\frac{17\sqrt2 +13}{15}\}$ along the Thue-Morse sequence.

\section{Linear combinations}
Throughout the remainder of this paper, we shall use the following notation:
$$
f(s)=\sum_{n\geq1} \frac{\varepsilon_{n-1}}{n^s}, \ \ \ \  g(s) = \sum_{n\geq1} \frac{\varepsilon_n}{n^s}, \ \ \ \  \varphi(s) = \sum_{n\geq1} \frac{t_{n-1}}{n^s}, \ \ \ \  \gamma(s) = \sum_{n\geq1} \frac{t_n}{n^s}.
$$

We begin this section by recalling a result on Dirichlet series whose sign alternates following the Thue-Morse sequence, originally due to Allouche and Cohen \cite{AlloucheCohen85} and already mentioned in the introduction.
\begin{lemma} \label{lemma-alkauskas}
	For all $s \in \mathbb{C}$ with $\Re(s)>1$,
	$$
	f(s) = \frac{1-2^s}{1+2^s} g(s).
	$$
\end{lemma}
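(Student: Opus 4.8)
The plan is to exploit the self-similarity of the $\pm1$ Thue--Morse sequence, namely the relations $\varepsilon_{2n}=\varepsilon_n$ and $\varepsilon_{2n+1}=-\varepsilon_n$, which follow at once from $t_{2n}=t_n$, $t_{2n+1}=1-t_n$ and the definition $\varepsilon_n=(-1)^{t_n}$. Since both Dirichlet series converge absolutely for $\Re(s)>1$, I may split each of them according to the parity of the summation index and reindex freely. It is convenient to isolate the common ``odd part'' $S(s):=\sum_{m\geq0}\frac{\varepsilon_m}{(2m+1)^s}$, which also converges absolutely in this half-plane.

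First I would treat $g(s)=\sum_{n\geq1}\frac{\varepsilon_n}{n^s}$. Separating the even terms $n=2m$ ($m\geq1$) from the odd terms $n=2m+1$ ($m\geq0$) and applying the two parity relations gives
$$
g(s)=\sum_{m\geq1}\frac{\varepsilon_{2m}}{(2m)^s}+\sum_{m\geq0}\frac{\varepsilon_{2m+1}}{(2m+1)^s}=2^{-s}g(s)-S(s),
$$
so that $(1-2^{-s})g(s)=-S(s)$. Next I would do the same for $f(s)$, which after the shift $m=n-1$ reads $f(s)=\sum_{n\geq0}\frac{\varepsilon_n}{(n+1)^s}$; splitting now according to the parity of $n$ (so that $n+1$ is odd, resp.\ even) yields
$$
f(s)=\sum_{m\geq0}\frac{\varepsilon_{2m}}{(2m+1)^s}+\sum_{m\geq0}\frac{\varepsilon_{2m+1}}{(2m+2)^s}=S(s)-2^{-s}f(s),
$$
hence $(1+2^{-s})f(s)=S(s)$. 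Eliminating $S(s)$ between the two displayed identities gives $(1+2^{-s})f(s)=-(1-2^{-s})g(s)$, and multiplying through by $2^s$ produces $(1+2^s)f(s)=(1-2^s)g(s)$, i.e.\ the claimed identity $f(s)=\frac{1-2^s}{1+2^s}\,g(s)$.

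I do not expect any real obstacle here. The only points needing a word of care are the legitimacy of the term rearrangements, which is immediate from absolute convergence for $\Re(s)>1$, and the fact that the final division is permissible because $1+2^{-s}$ and $1-2^{-s}$ have no zeros in the half-plane $\Re(s)>1$ (their zeros lie on the line $\Re(s)=0$). One could alternatively extract the identity from the Allouche--Cohen functional equation quoted in the introduction, but the parity-splitting argument above is shorter and entirely self-contained.
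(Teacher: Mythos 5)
Your argument is correct: both parity splits are valid by absolute convergence for $\Re(s)>1$, the two relations $(1-2^{-s})g(s)=-S(s)$ and $(1+2^{-s})f(s)=S(s)$ are exactly right, and the elimination of $S(s)$ together with the observation that $1\pm 2^{-s}$ do not vanish in this half-plane gives the lemma. The paper's proof (Alkauskas's) rests on the same two facts but derives the $f$-relation differently: it writes every index as $n=2^k(2m+1)$, uses $\varepsilon_{2^k(2m+1)-1}=(-1)^k\varepsilon_m$, and sums the resulting geometric series $\sum_{k\geq0}(-2^{-s})^k$ to get $f(s)=\frac{2^s}{2^s+1}S(s)$ in one stroke, after which it splits $g$ by parity just as you do. Your derivation is the ``folded'' version of this: a single parity split produces a self-referential equation $f(s)=S(s)-2^{-s}f(s)$, which you solve algebraically instead of unrolling the recursion into the full $2$-adic decomposition. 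The two routes are equivalent in content; yours is slightly more elementary (no unique-factorization bookkeeping, no infinite geometric series) and is in fact the ``unified'' even/odd treatment of both $f$ and $g$ that the paper mentions as an exercise after the proof, while the paper's version makes the mechanism behind the factor $\frac{2^s}{2^s+1}$ more transparent.
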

While Allouche and Cohen's proof involves the analytic continuations of $f$ and $g$, a simpler proof was proposed by Alkauskas \cite{Alkauskas01} in 2001 that we cannot resist replicating here. This proof relies only on the fact that every positive integer $n$ can be uniquely represented as the product $2^k(2m+1)$ for $k\geq0$ and $m\geq0$.
\begin{proof} [Proof of Lemma 1 \rm (\cite{Alkauskas01})] 
	We have
	$$
	f(s) = \sum_{k\geq0,m\geq0} \frac{\varepsilon_{2^k(2m+1)-1}}{2^{ks}(2m+1)^s} = \sum_{k\geq0,m\geq0} \frac{(-1)^k\varepsilon_m}{2^{ks}(2m+1)^s} = \frac{2^s}{2^s+1} \sum_{m\geq0} \frac{\varepsilon_m}{(2m+1)^s}.
	$$
	Now we know that
	$$
	\sum_{m\geq0} \frac{\varepsilon_m}{(2m+1)^s} = \sum_{m\geq1} \frac{\varepsilon_m}{(2m)^s} - \sum_{m\geq1} \frac{\varepsilon_m}{m^s},
	$$
	by splitting $\sum_{m\geq1} \frac{\varepsilon_m}{m^s}$ into even and odd indexes and using the fact that $\varepsilon_{2m}=\varepsilon_m$ and $\varepsilon_{2m+1}=-\varepsilon_m$. The proof follows naturally.
\end{proof}
We note that one could also evaluate $g(s)$ in the same manner, thus yielding a unified proof of Lemma \ref{lemma-alkauskas}. This is left as an exercise for the reader.

\begin{corollary} \label{coro-holo}
	For any holomorphic functions $u$ and $v$, we have
	$$
	u(s)\varphi(s) + v(s)\gamma(s) = \frac{u(s) + v(s)}{2} \zeta(s) - \frac{f(s)}{2} \left(u(s)+v(s)\frac{1+2^s}{1-2^s}\right).
	$$
\end{corollary}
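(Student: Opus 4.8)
The plan is to reduce everything to $f(s)$ and $\zeta(s)$ using the single elementary identity $t_n = \tfrac{1-\varepsilon_n}{2}$, which holds for every $n\ge 0$ because $\varepsilon_n=(-1)^{t_n}$ and $t_n\in\{0,1\}$ (in particular the $n=1$ term is consistent, since $t_0=0=\tfrac{1-\varepsilon_0}{2}$). First I would substitute this into the definitions of $\varphi$ and $\gamma$. Since all the Dirichlet series in sight converge absolutely for $\Re(s)>1$, the termwise splitting is legitimate and yields
$$
\varphi(s) = \tfrac12\zeta(s) - \tfrac12 f(s), \qquad \gamma(s) = \tfrac12\zeta(s) - \tfrac12 g(s).
$$

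Next I would invoke Lemma~\ref{lemma-alkauskas} in the form $g(s) = \dfrac{1+2^s}{1-2^s}\,f(s)$, which is the reciprocal of the stated relation and is valid for $\Re(s)>1$ since $1-2^s\ne 0$ there. Substituting this into the expression for $\gamma(s)$ gives
$$
\gamma(s) = \tfrac12\zeta(s) - \tfrac12\cdot\frac{1+2^s}{1-2^s}\,f(s).
$$
Finally I would form the combination $u(s)\varphi(s) + v(s)\gamma(s)$, group the two $\zeta(s)$ contributions into $\tfrac{u(s)+v(s)}{2}\zeta(s)$ and the two $f(s)$ contributions into $-\tfrac{f(s)}{2}\bigl(u(s)+v(s)\tfrac{1+2^s}{1-2^s}\bigr)$, which is exactly the claimed identity.

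There is no genuine obstacle here: the only points requiring a word of care are the justification of the termwise manipulation (absolute convergence for $\Re(s)>1$), the harmless $n=1$ term, and the fact that one may divide by $1-2^s$ on that half-plane. The functions $u$ and $v$ enter purely as scalar multipliers at each fixed $s$, so their holomorphy is not used in the computation; it is mentioned only so that both sides are meaningful as functions of $s$ wherever $f$, $g$, and $\zeta$ are defined after analytic continuation.
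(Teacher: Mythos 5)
Your proof is correct and follows essentially the same route as the paper: substitute $\varepsilon_n = 1-2t_n$ to get $\varphi(s)=\tfrac12\zeta(s)-\tfrac12 f(s)$ and $\gamma(s)=\tfrac12\zeta(s)-\tfrac12 g(s)$, then eliminate $g$ via Lemma~\ref{lemma-alkauskas} and take the linear combination. Your added remarks on absolute convergence, the nonvanishing of $1-2^s$ for $\Re(s)>1$, and the fact that holomorphy of $u,v$ is not really used are accurate refinements of the paper's brief argument.
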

\begin{proof}
	Using Lemma \ref{lemma-alkauskas} together with $\varepsilon_n = 1 - 2t_n$, we have
	$$
	\varphi(s) = \frac12 \zeta(s) - \frac12 f(s), \ \ \ \ \gamma(s) = \frac12 \zeta(s) - \frac{1+2^s}{2(1-2^s)} f(s).
	$$
	Now taking two holomorphic functions $u$ and $v$, we quickly obtain the above expression for $u(s) \varphi(s) + v(s) \gamma(s)$.
\end{proof}

We now have all the tools to establish our first result in this paper.
\begin{theorem} \label{theo-riemann-zeta}
	Let $\zeta(s)=\sum_{n\geq1} 1/n^s$ denote the Riemann zeta function defined for complex $s$ with $\Re(s)>1$. Then
	$$
	(2^s+1) \varphi(s) + (2^s-1) \gamma(s) = 2^s \zeta(s).
	$$
\end{theorem}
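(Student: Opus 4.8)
The plan is to apply Corollary~\ref{coro-holo} with a judicious choice of the holomorphic functions $u$ and $v$ so that the term involving $f(s)$ disappears entirely, leaving only a multiple of $\zeta(s)$. Concretely, I would take $u(s) = 2^s + 1$ and $v(s) = 2^s - 1$, which are entire on $\mathbb{C}$, so the hypothesis of the corollary is satisfied, and then simply read off the right-hand side.

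With this choice, the factor multiplying $-f(s)/2$ in Corollary~\ref{coro-holo} becomes
$$u(s) + v(s)\frac{1+2^s}{1-2^s} = (2^s+1) + (2^s-1)\cdot\frac{1+2^s}{1-2^s} = (2^s+1) - (2^s+1) = 0,$$
since $(2^s-1)/(1-2^s) = -1$. Hence the entire $f(s)$-contribution vanishes. Meanwhile $u(s)+v(s) = 2\cdot 2^s$, so the right-hand side of the corollary collapses to $\tfrac{2\cdot 2^s}{2}\,\zeta(s) = 2^s\zeta(s)$, which is exactly the claimed identity $(2^s+1)\varphi(s) + (2^s-1)\gamma(s) = 2^s\zeta(s)$.

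Alternatively, one can bypass the corollary and argue directly from the two expansions $\varphi(s) = \tfrac12\zeta(s) - \tfrac12 f(s)$ and $\gamma(s) = \tfrac12\zeta(s) - \tfrac{1+2^s}{2(1-2^s)}f(s)$ established in the proof of the corollary: multiplying the first by $2^s+1$, the second by $2^s-1$, and adding, the $f(s)$ terms cancel by the same elementary manipulation $(2^s-1)\cdot\frac{1+2^s}{1-2^s} = -(2^s+1)$, while the $\zeta(s)$ terms combine to $2^s\zeta(s)$.

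There is no substantive obstacle here: the only ``insight'' needed is to recognize that the pair $\bigl(u(s),v(s)\bigr) = (2^s+1,\,2^s-1)$ is, up to scaling, the unique choice that annihilates the $f(s)$ term, after which the computation is a single line. I would nonetheless be careful to record that the identity is asserted for $\Re(s) > 1$, the common half-plane of absolute convergence of $\varphi$, $\gamma$, $f$, and $\zeta$, which is precisely where Lemma~\ref{lemma-alkauskas} and hence Corollary~\ref{coro-holo} apply.
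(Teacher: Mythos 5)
Your proposal is correct and is essentially the paper's own first proof: the paper likewise applies Corollary~\ref{coro-holo} with $u(s)=2^s+1$ and $v(s)=2^s-1$, and your verification that the $f(s)$ term cancels simply makes explicit the computation the paper leaves implicit. (The paper also offers a second proof by index-splitting, but your route coincides with its primary one.)
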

Note that both Dirichlet series on the left-hand side converge for $\Re(s)>1$ since the sequence $(t_n)_{n\geq0}$ takes only finitely many values. 

We shall now give two proofs of this identity. The first uses Corollary \ref{coro-holo}, while the second relies on index-splitting (used also within the context of the Woods-Robbins product by Allouche, Mend\`es France and Peyri\`ere \cite{Allouche00}, for instance).
\begin{proof} [Proof 1 of Theorem \ref{theo-riemann-zeta}]
	We take Corollary \ref{coro-holo} with $u(s) = 2^s+1$ and $v(s) = 2^s-1$. The proof immediately follows.
\end{proof}
Of course, we could also prove the Theorem above without having recourse to any of the results above. For instance, we could simply use the relations $t_{2n} = t_n$ and $t_{2n+1} = 1-t_n$ as follows.
\begin{proof}[Proof 2 of Theorem \ref{theo-riemann-zeta}]
	We begin by splitting $\gamma(s)$ and $\varphi(s)$ into odd and even indexes. On the one hand, we have
	$$
	\gamma(s) = 2^{-s} \gamma(s) - \sum_{n\geq0} \frac{t_{n}}{(2n+1)^s} + (1-2^{-s}) \zeta(s).
	$$
	On the other hand,
	$$
	\varphi(s) = 2^{-s} \zeta(s) - 2^{-s} \varphi(s) + \sum_{n\geq0} \frac{t_{n}}{(2n+1)^s}.
	$$
	Now taking the sum of these two equations yields
	$$
	\gamma(s) + \varphi(s) =  2^{-s} ( \gamma(s) - \varphi(s) ) + \zeta(s),
	$$
	and a simple rearrangement of the terms concludes this proof.
\end{proof}
Here we note that whenever $t_{n-1}=t_n=0$, the corresponding $n^{\rm th}$ term disappears from both Dirichlet series on the left-hand side. The first few missing terms are thus $n=6,10,18,24,\ldots$ (sequence A248056 in the OEIS). Our result above implies several interesting examples, which we have already mentioned in the introduction.
\begin{example}
	\rm{We have the following equalities:}
	\begin{align*}
		(a) \ \ \ \  \ \ \ \ &\sum_{n\geq1} \frac{5 t_{n-1} + 3 t_n}{n^2} = \frac{2 \pi^2}{3}, \\
		(b) \ \ \ \  \ \ \ \ &\sum_{n\geq1} \frac{9 t_{n-1} + 7 t_n}{n^3} = 8 \zeta(3).
	\end{align*}
\end{example}
There are several ways to extend these results, which we will do in the following sections.

\subsection{Generalization to different alphabets along the Thue-Morse sequence}
In the following paragraphs, we extend Theorem \ref{theo-riemann-zeta} to linear combinations of series involving various alphabets $\{a,b\}$ along the Thue-Morse sequence, i.e., sequences with the substitutions $0\to a$ and $1\to b$, with $a,b\in\mathbb{R}$, over the $t_n$ sequence. In particular, we show that there exist alphabets which give rise to identities involving the alternating zeta function $\eta(s) = \sum_{n\geq1} \frac{(-1)^{n-1}}{n^s}$, others that give rise to identities involving the Riemann $\zeta$ function and, finally, alphabets that produce simple linear combinations with other Dirichlet series.
\begin{theorem} \label{theo-alphabets}
	Let $k,\ell>0$ be real numbers and define the sequences $q_n = t_n - k$ and $r_n = t_n + \ell$ for all $n>0$. Furthermore, consider the function $\lambda(s;k,\ell) = 2^s - (2^s (k-\ell) + (k+\ell))$ for some complex $s$ with $\Re(s)>1$. We then have
	\begin{itemize}
		\item[\rm (1)] {\makebox[8cm]{$\displaystyle \sum_{n\geq1} \frac{q_{n-1}}{n^s} = \frac{1-2^s}{1+2^s} \sum_{n\geq1} \frac{r_n}{n^s}$\hfill} if $\lambda(s;k,\ell) = 0$},
		\item[\rm (2)] {\makebox[8cm]{$\displaystyle(2^s+1) \sum_{n\geq1} \frac{q_{n-1}}{n^s} + (2^s-1) \sum_{n\geq1} \frac{r_n}{n^s} = 2^s \zeta(s)$\hfill} if $\lambda(s;k,\ell) = 2^s$,}
		\item[\rm (3)] {\makebox[8cm]{$\displaystyle(2^s+1) \sum_{n\geq1} \frac{q_{n-1}}{n^s} + (2^s-1) \sum_{n\geq1} \frac{r_n}{n^s} = 2^s \eta(s)$\hfill} if $\lambda(s;k,\ell)=2^s-2$}.
	\end{itemize}
\end{theorem}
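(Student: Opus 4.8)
The plan is to reduce each of the three identities to the single prescribed value of $\lambda(s;k,\ell)$ by rewriting the two Dirichlet series on the left in terms of $\varphi$, $\gamma$ and $\zeta$. Since $q_{n-1}=t_{n-1}-k$ and $r_n=t_n+\ell$, and since the series $\varphi(s)$, $\gamma(s)$, $\zeta(s)$ all converge absolutely for $\Re(s)>1$ (because $(t_n)_{n\geq0}$ is bounded), we may split termwise to obtain
$$
\sum_{n\geq1}\frac{q_{n-1}}{n^s}=\varphi(s)-k\,\zeta(s),\qquad \sum_{n\geq1}\frac{r_n}{n^s}=\gamma(s)+\ell\,\zeta(s),
$$
the $n=1$ term of the first series being $q_0=t_0-k=-k$. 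It will also be convenient to record the elementary identity $2^s(k-\ell)+(k+\ell)=k(2^s+1)-\ell(2^s-1)$, so that $\lambda(s;k,\ell)=2^s-\bigl(k(2^s+1)-\ell(2^s-1)\bigr)$.

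For parts (2) and (3) I would substitute the two displayed expressions into the left-hand side and invoke Theorem \ref{theo-riemann-zeta}: the contribution of $(2^s+1)\varphi(s)+(2^s-1)\gamma(s)$ is exactly $2^s\zeta(s)$, leaving
$$
(2^s+1)\sum_{n\geq1}\frac{q_{n-1}}{n^s}+(2^s-1)\sum_{n\geq1}\frac{r_n}{n^s}=2^s\zeta(s)-\bigl(k(2^s+1)-\ell(2^s-1)\bigr)\zeta(s).
$$
Thus part (2) holds precisely when $k(2^s+1)-\ell(2^s-1)=0$, i.e. $\lambda(s;k,\ell)=2^s$. For part (3) I would instead use the standard identity $\eta(s)=(1-2^{1-s})\zeta(s)=\tfrac{2^s-2}{2^s}\,\zeta(s)$, whence the right-hand side above equals $2^s\eta(s)$ exactly when $k(2^s+1)-\ell(2^s-1)=2$, i.e. $\lambda(s;k,\ell)=2^s-2$.

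For part (1) I would avoid Theorem \ref{theo-riemann-zeta} and use instead the formulas $\varphi(s)=\tfrac12\zeta(s)-\tfrac12 f(s)$ and $\gamma(s)=\tfrac12\zeta(s)-\tfrac{1+2^s}{2(1-2^s)}f(s)$ established in the proof of Corollary \ref{coro-holo} (equivalently Lemma \ref{lemma-alkauskas} with $\varepsilon_n=1-2t_n$). Substituting into $\sum_{n\geq1}\frac{q_{n-1}}{n^s}=\frac{1-2^s}{1+2^s}\sum_{n\geq1}\frac{r_n}{n^s}$, the $f(s)$-terms on the two sides cancel identically (both equal $-\tfrac12 f(s)$), and dividing the remaining scalar multiples of $\zeta(s)$ by $\zeta(s)$ (nonzero for $\Re(s)>1$) leaves $\bigl(\tfrac12-k\bigr)(1+2^s)=(1-2^s)\bigl(\tfrac12+\ell\bigr)$; expanding and collecting terms rearranges this to $2^s-\bigl(2^s(k-\ell)+(k+\ell)\bigr)=0$, i.e. $\lambda(s;k,\ell)=0$.

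There is no genuine obstacle here: once the left-hand sides are expressed through $\varphi$, $\gamma$, $\zeta$, each part is a short computation. The only points needing care are justifying the termwise splitting (immediate from absolute convergence for $\Re(s)>1$ since $t_n\in\{0,1\}$), keeping track of the $q_0=-k$ term, and matching each resulting linear condition on $(k,\ell)$ with the stated value of $\lambda(s;k,\ell)$ by means of the identity $2^s(k-\ell)+(k+\ell)=k(2^s+1)-\ell(2^s-1)$.
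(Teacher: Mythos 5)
Your proof is correct and follows essentially the same route as the paper: write $\sum_{n\geq1} q_{n-1}/n^s=\varphi(s)-k\zeta(s)$ and $\sum_{n\geq1} r_n/n^s=\gamma(s)+\ell\zeta(s)$, apply Theorem~\ref{theo-riemann-zeta} so the combination equals $\lambda(s;k,\ell)\,\zeta(s)$, and read off the three cases (with $\eta(s)=(1-2^{1-s})\zeta(s)$ for case (3)). Your treatment of case (1) via the explicit formulas $\varphi(s)=\tfrac12\zeta(s)-\tfrac12 f(s)$ and $\gamma(s)=\tfrac12\zeta(s)-\tfrac{1+2^s}{2(1-2^s)}f(s)$ is only a cosmetic variation: the paper gets the same conclusion by setting $\lambda(s;k,\ell)=0$ in the combined identity and rearranging.
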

\begin{proof}
	Define the sequences $q_n = t_n - k$ and $r_n = t_n + \ell$ for all $n>0$ and real $k,\ell>0$. We immediately have
	$$
	\sum_{n\geq1} \frac{q_{n-1}}{n^s} = \varphi(s) -k\zeta(s), \ \ \ \ \sum_{n\geq1} \frac{r_{n}}{n^s} = \gamma(s) +l\zeta(s).
	$$
	Thus,
	$$
	(2^s+1) \sum_{n\geq1} \frac{q_{n-1}}{n^s} + (2^s-1) \sum_{n\geq1} \frac{r_n}{n^s} = \zeta(s) \left(2^s - (2^s (k-l) + (k+l))\right) .
	$$
	Inspired by the coefficient of $\zeta(s)$ on the right-hand side, we define the function $\lambda(s;k,\ell) = 2^s - (2^s (k-\ell) + (k+\ell))$ for real $k,\ell>0$ and complex $s$ with $\Re(s)>1$. It is obvious that if $\lambda(s;k,\ell)=0$, the $\zeta(s)$ term on the right-hand side vanishes, thus proving (1). If $\lambda(s;k,\ell)=2^s$, we have identities of the same type as in Theorem \ref{theo-riemann-zeta}, proving (2), and if $\lambda(s;k,\ell) = 2^s-2$ then the identity $\eta(s) = \left(1-2^{1-s}\right) \zeta(s)$ quickly establishes (3).
\end{proof}
Solutions to each of these cases lead to interesting illustrative examples. A first set of ``simple'' solutions -- in terms of $k$ and $\ell$ only -- are easy to find. For the case (1) above, we have $k=\frac12, \ell=-\frac12$, which leads to the classic Woods-Robbins product by differentiation of the resulting series identity at $s=0$, as already noted by Allouche and Cohen \cite{AlloucheCohen85}. The solution of the second case, $k=0, \ell=0$, results in our identity in Theorem \ref{theo-riemann-zeta}, and finally that of the third case ($k=1, \ell=1$) results in
$$
(2^s+1) \sum_{n\geq1} \frac{q_{n-1}}{n^s} + (2^s-1) \sum_{n\geq1} \frac{r_n}{n^s} = 2^s \eta(s),
$$
where $q_n \to \{-1,0\}$ and $r_n \to \{1,2\}$ along the Thue-Morse sequence.

Another, perhaps more interesting set of solutions can be found by taking $s$ into account as well.

\begin{proposition}
	Define the sequences $q_n = t_n - k$ and $r_n = t_n + \ell$ for all $n>0$ and real $k,\ell>0$. We have the following equalities:
	\begin{align*}
		(a)& \ \ \ \  \ \ \ \ 5 \sum_{n\geq1} \frac{q_{n-1}}{n^2} = -3 \sum_{n\geq1} \frac{r_n}{n^2}, & & q_n \to \{-1,0\}, r_n \to \{\frac13,\frac43\} \\
		(b)&  \ \ \ \  \ \ \ \ \sum_{n\geq1} \frac{9 q_{n-1} + 7 r_n}{n^3} = 8 \zeta(3), & & q_n \to \{-1,0\}, r_n\to \{\frac97,\frac{16}{7}\} \\
		(c)& \ \ \ \  \ \ \ \ \sum_{n\geq1} \frac{17 q_{n-1} + 15 r_n}{n^4} = 16 \eta(4), & & q_n\to\{-\sqrt2,1-\sqrt2\}, \\
		& & & r_n\to\{\frac{17\sqrt2 -2}{15},\frac{17\sqrt2 +13}{15}\}.
	\end{align*}
\end{proposition}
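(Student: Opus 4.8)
The plan is to recognize each of the three equalities as a specialization of Theorem~\ref{theo-alphabets} to a particular integer value of $s$: (a) will be case~(1) at $s=2$, (b) case~(2) at $s=3$, and (c) case~(3) at $s=4$. For each of these $s$ all the series converge, since $(t_n)_{n\geq0}$ --- and hence $(q_n)$ and $(r_n)$ --- takes only finitely many values, exactly as observed after Theorem~\ref{theo-riemann-zeta}. So the task reduces to checking that the alphabets displayed on the right of the Proposition are precisely the ones for which the hypothesis of the relevant case holds.

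First I would determine $k$. Since $q_n=t_n-k$ with $t_n\in\{0,1\}$, the sequence $(q_n)$ takes the two values $-k$ and $1-k$, so the prescribed alphabet for $q_n$ fixes $k$ uniquely --- reading off the value attained when $t_n=0$ gives $k=1$ in (a) and (b) and $k=\sqrt2$ in (c). Then I would solve for $\ell$: with $s$ and $k$ fixed, the defining condition of the relevant case is a linear equation in $\ell$, because $\lambda(s;k,\ell)=2^s-(2^s(k-\ell)+(k+\ell))$ is affine in $\ell$. Explicitly:
\begin{itemize}
	\item[(a)] $\lambda(2;1,\ell)=0$ reads $4-(4(1-\ell)+(1+\ell))=0$, giving $\ell=\tfrac13$;
	\item[(b)] $\lambda(3;1,\ell)=2^3$ reads $8-(8(1-\ell)+(1+\ell))=8$, giving $\ell=\tfrac97$;
	\item[(c)] $\lambda(4;\sqrt2,\ell)=2^4-2$ reads $16-(16(\sqrt2-\ell)+(\sqrt2+\ell))=14$, giving $\ell=\tfrac{17\sqrt2-2}{15}$.
\end{itemize}
In every case $k,\ell>0$, so Theorem~\ref{theo-alphabets} is applicable; moreover $r_n=t_n+\ell$ then has image $\{\ell,\,1+\ell\}$, which one checks equals $\{\tfrac13,\tfrac43\}$, $\{\tfrac97,\tfrac{16}{7}\}$ and $\{\tfrac{17\sqrt2-2}{15},\tfrac{17\sqrt2+13}{15}\}$ respectively, as stated.

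Finally I would read off the conclusions by substituting the numerical value of $s$. For (b) and (c), case~(2) and case~(3) of Theorem~\ref{theo-alphabets} give $(2^s+1)\sum_{n\geq1}q_{n-1}/n^s+(2^s-1)\sum_{n\geq1}r_n/n^s$ equal to $2^s\zeta(s)$ and to $2^s\eta(s)$ respectively; at $s=3$ the coefficients $(2^s+1,\,2^s-1,\,2^s)$ are $(9,7,8)$ and at $s=4$ they are $(17,15,16)$, which is exactly parts (b) and (c). For (a), case~(1) gives $\sum_{n\geq1}q_{n-1}/n^2=\frac{1-2^2}{1+2^2}\sum_{n\geq1}r_n/n^2=-\tfrac35\sum_{n\geq1}r_n/n^2$, and multiplying through by $5$ yields $5\sum_{n\geq1}q_{n-1}/n^2=-3\sum_{n\geq1}r_n/n^2$.

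I do not anticipate a genuine obstacle: once Theorem~\ref{theo-alphabets} is granted, everything is bookkeeping, the only real computation being the linear equation for $\ell$. The points that need a little care are verifying the standing hypothesis $k,\ell>0$ (notably in (c), where $\ell=\tfrac{17\sqrt2-2}{15}>0$ because $17\sqrt2>2$), handling the $\sqrt2$ cleanly when reading off the alphabet for $r_n$ in (c), and stating explicitly --- as above --- that all the Dirichlet series converge at the integer $s$ under consideration since the underlying sequences are bounded.
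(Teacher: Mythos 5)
Your proposal is correct and follows essentially the same route as the paper: both reduce each identity to the corresponding case of Theorem~\ref{theo-alphabets} by checking the condition on $\lambda(s;k,\ell)$ for the given alphabets, the only (immaterial) difference being that you fix $s$ and $k$ and solve the affine equation for $\ell$, whereas the paper solves $\lambda$'s condition for $s$ in terms of $k,\ell$ and then substitutes the chosen alphabet. All your numerical verifications ($\ell=\tfrac13,\ \tfrac97,\ \tfrac{17\sqrt2-2}{15}$ and the coefficient triples $(5,3)$, $(9,7,8)$, $(17,15,16)$) match the paper's.
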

\begin{proof}
	The general solution of the equation $\lambda(s;k,\ell)=0$ (i.e., case (1) in Theorem \ref{theo-alphabets}) in the reals is
	$$
	s=\frac{\log(\frac{k+\ell}{-k+\ell+1})}{\log2},
	$$
	with $k\neq \ell+1$ and $k+\ell\neq 0$. So for instance we can take $k=1$ and $\ell=\frac13$, yielding the  sequences $q_n=t_n-1$ and $r_n=t_n+\frac13$, for all $n\geq0$, i.e., the sequences defined respectively by $\{-1,0\}$ and $\{\frac13,\frac43\}$ along the Thue-Morse sequence. A simple substitution above gives $s=2$, thereby proving our statement (a).
	
	We now turn our attention to the solution of $\lambda(s;k,\ell)=2^s$ (i.e., case (2) in Theorem \ref{theo-alphabets}), which in the reals is
	$$
	s=\frac{\log(-\frac{k+\ell}{k-\ell})}{\log2},
	$$
	with $k-\ell\neq 0$ and $k+\ell\neq 0$. Let now $q_n=t_n-1$ and $r_n=t_n+\frac97$, for all $n\geq0$, in other words the sequences defined respectively by $\{-1,0\}$ and $\{\frac97,\frac{16}{7}\}$ along the Thue-Morse sequence. This means that we have $k=1$ and $\ell=\frac97$, which yields $s=3$ in the equation above and thus proves statement (b).
	
	Finally, the solution of the equation $\lambda(s;k,\ell)=2^s-2$ in the reals, corresponding to case (3) in Theorem \ref{theo-alphabets} is
	$$
	s=\frac{\log(-\frac{k+\ell-2}{k-\ell})}{\log2},
	$$
	with $k-\ell\neq 0$ and $k+\ell\neq 2$, which allows us to choose $k=\sqrt2$ and $\ell=\frac{17\sqrt2 -2}{15}$. This gives the sequences $q_n=t_n-\sqrt2$ and $r_n=t_n+\frac{17\sqrt2 -2}{15}$, for all $n\geq0$, i.e., the sequences defined respectively by $\{-\sqrt2,1-\sqrt2\}$ and $\{\frac{17\sqrt2 -2}{15},\frac{17\sqrt2 +13}{15}\}$ along the Thue-Morse sequence. Thus we have $s=4$ and identity (c) as claimed.
\end{proof}

\section{Conclusion and further work}

In this paper we have found closed forms for certain linear combinations of Dirichlet series associated with the Thue-Morse sequence in terms of known constants and functions. However, closed forms for the individual series remain elusive.
\begin{question}
	Do the series $\displaystyle \sum_{n\geq1} \frac{t_n}{n^s}$ and $\displaystyle \sum_{n\geq1} \frac{t_{n-1}}{n^s}$ for $s \in \mathbb{C}$ with $\Re(s)>1$ admit closed forms in terms of known constants and functions?
\end{question}
Despite our efforts, we have not been able to find a set of linear combinations allowing us to eliminate either $f$ or $g$ from Theorem \ref{theo-riemann-zeta}.

\subsection{Extension to other sequences}

In some of our proofs we used an index-splitting method to find expressions for series involving the Thue-Morse sequence and variations thereof. The same method can possibly be applied to other series whose coefficients are generated by finite automata. A few examples are listed below, and the proofs are left to the reader.
\begin{example} \label{theo-delta} \rm
	Let $\delta_n = t_n-t_{n-1}$ for all $n\geq1$ and $\varepsilon_n=(-1)^{t_n}$ the $\pm1$ Thue-Morse sequence. Then for all $s$ with $\Re(s)>1$,
	$$
	\sum_{n\geq1} \frac{\delta_n}{n^s} = \frac{4^s}{4^{s}-1} \sum_{n\geq0} \frac{\varepsilon_n}{(2n+1)^s}.
	$$
\end{example}

\begin{example} \rm
	Let $\sigma_n$ denote the ``period-doubling sequence'' (A096268 in the OEIS), defined by the recurrence $\sigma_{2n}=0, \sigma_{4n+1} = 1, \sigma_{4n+3} = \sigma_n$. Then for all $s$ with $\Re(s)>1$,
	$$
	\sum_{n\geq1} \frac{\sigma_n \left( (4n+3)^{s} - n^{s} \right)}{\left( 4n^2+3n \right)^s} = 4^{-s} \zeta\left(s,\frac14\right),
	$$
	where $\zeta(s,a)$ denotes the Hurwitz zeta function.
\end{example}

\noindent {\bf Acknowledgement.}
The author is indebted to two anonymous referees, whose comments have greatly increased the quality of this paper.

\bigskip
\hrule
\bigskip

\noindent 2010 {\it Mathematics Subject Classification}: Primary 11A63; Secondary 68R15, 11Y60, 11B85, 11M41. \\
\noindent \emph{Keywords: } Thue-Morse sequence, Dirichlet series, Riemann zeta function.

\bigskip
\hrule
\bigskip

\noindent (Concerned with sequences
\seqnum{A000120},
\seqnum{A010060},
\seqnum{A014081},
\seqnum{A020985},
\seqnum{A029883},
\seqnum{A088705},
\seqnum{A096268},
\seqnum{A106400}, and
\seqnum{A248056}.)

\bigskip
\hrule
\bigskip
	
\end{document}